\def\th@plain{\slshape}                                        %
\newcommand{\oi}{[0,1]}
\newcommand{\Nbb}{\mathbb{N}}
\newcommand{\Zbb}{\mathbb{Z}}
\newcommand{\Qbb}{\mathbb{Q}}
\newcommand{\Rbb}{\mathbb{R}}
\newcommand{\Cbb}{\mathbb{C}}
\newcommand{\one}{{\rm 1\mskip-4mu l}}
\newcommand{\op}{\ordine_p}
\newcommand{\To}{$\,\Rightarrow\,$}
\newcommand{\labell}[1]{\label{#1}}
\newcommand{\newword}[1]{\textsl{#1}}
\newcommand{\vect}[3]{#1_#2,\ldots ,#1_#3}
\newcommand{\abs}[1]{\lvert#1\rvert}
\newcommand{\norm}[1]{\lVert#1\rVert}
\newcommand{\floor}[1]{\lfloor #1 \rfloor}
\newcommand{\set}[1]{\{ #1 \}}
\newcommand{\mobiusfrac}[2]{\mu\biggl(\frac{#1}{#2}\biggr)}
\DeclareMathSymbol{\upharpoonright}{\mathrel}{AMSa}{"16}
\DeclareMathSymbol{\nmid}{\mathrel}{AMSb}{"2D}
\DeclareMathOperator{\den}{den}
\DeclareMathOperator{\ordine}{o}
\DeclareMathOperator{\GL}{GL}
\DeclareMathOperator{\Lin}{Lin}
\DeclareMathOperator{\rad}{rad}
\DeclareMathOperator{\Mat}{Mat}
\DeclareMathOperator{\dom}{dom}
\theoremstyle{plain}
\newtheorem{theorem}{Theorem}[section]
\newtheorem{lemma}[theorem]{Lemma}
\newtheorem{corollary}[theorem]{Corollary}
\theoremstyle{definition}
\newtheorem{remark}[theorem]{Remark}
\begin{document}

\bibliographystyle{plain}

\sloppy

\title[Denominator-preserving maps]{Denominator-preserving maps}

\author[]{Giovanni Panti}
\address{Department of Mathematics\\
University of Udine\\
via delle Scienze 208\\
33100 Udine, Italy}

\begin{abstract}
Let $F$ be a continuous injective map from an open subset of $\Rbb^n$ to $\Rbb^n$. Assume that, for infinitely many $k\ge1$, $F$ induces a bijection between the rational points of denominator $k$ in the domain and those in the image (the denominator of $(a_1/b_1,\ldots,a_n/b_n)$ being the l.c.m.~of $\vect b1n$). Then $F$ preserves the Lebesgue measure.
\end{abstract}

\keywords{denominator of rational points, uniform distribution, Lebesgue measure}

\thanks{\emph{2010 Math.~Subj.~Class.}: 28D05; 11K06}

\maketitle

\section{Preliminaries and statement of the main results}
For every point $u$ in $\Qbb^n$ there exist uniquely determined relatively prime integers $\vect a1n,k$ such that $k\ge1$ and $u=(a_1/k,\ldots,a_n/k)$. We than say that $u$ is a \newword{rational point} whose \newword{denominator} is $k$, and write $k=\den(u)$.
A map $F:U\to\Rbb^n$, where $U$ is an open subset of $\Rbb^n$, 
\newword{preserves many denominators} (respectively, \newword{all denominators}) if it induces a bijection between $\set{u\in U\cap\Qbb^n:\den(u)=k}$ and
$\set{v\in F[U]\cap\Qbb^n:\den(v)=k}$ for infinitely many $k$'s (respectively, all $k$'s).
In~\S\ref{ref8} we will give various examples of denominator-preserving maps.
The map $F$ \newword{preserves the Lebesgue measure} $\lambda$ if 
both $F$ and $F^{-1}$ send $\lambda$-measurable sets to $\lambda$-measurable sets, with preservation of the measure. We will prove the following theorem.

\begin{theorem}
Let\labell{ref1} $F:U\to\Rbb^n$ be a continuous injective map that preserves many denominators. Then $F$ preserves the Lebesgue measure.
\end{theorem}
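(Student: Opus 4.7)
The plan is to combine Brouwer's invariance of domain with an equidistribution statement for rational points of fixed denominator, and then to read off Lebesgue measure preservation via a Riesz representation argument. First, since $F\colon U\to\Rbb^n$ is continuous and injective with $U$ open, invariance of domain ensures that $F[U]$ is open in $\Rbb^n$ and that $F\colon U\to F[U]$ is a homeomorphism. Let $S\subseteq\Nbb$ denote the infinite set of denominators $k$ for which $F$ induces a bijection between the denominator-$k$ points of $U$ and of $F[U]$.

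The technical core would be the following equidistribution lemma, which I would establish separately. Let $J_n(k):=k^n\prod_{p\mid k}(1-p^{-n})$ be the Jordan totient, which by M\"obius inversion counts the denominator-$k$ rational points in $[0,1)^n$. Since the denominator-$k$ points of $\Rbb^n$ form a $\Zbb^n$-periodic set, one expects
\[
\lim_{k\to\infty}\frac{1}{J_n(k)}\sum_{v\in\Qbb^n,\,\den(v)=k}h(v)=\int_{\Rbb^n}h\,d\lambda
\]
for every $h\in C_c(\Rbb^n)$: the scaled lattice $\Zbb^n/k$ trivially equidistributes with density $k^n$, and one extracts from it the primitive points (those of denominator exactly $k$) via M\"obius inversion. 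Making this precise, with adequate uniformity of the error terms as $k\to\infty$ along $S$, is the step I expect to be the main obstacle.

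Granting the lemma, fix any $g\in C_c(F[U])$. Because $F$ is a homeomorphism onto the open set $F[U]$, the function $h:=g\circ F$, extended by $0$ outside $U$, lies in $C_c(\Rbb^n)$. For each $k\in S$ the denominator-preserving bijection gives
\[
\sum_{\den(u)=k}g(u)=\sum_{\den(v)=k}h(v),
\]
where both sums range over all rational points of $\Rbb^n$ of denominator $k$ (the terms outside $F[U]$, respectively outside $U$, vanish by the support conditions). Dividing both sides by $J_n(k)$ and letting $k\to\infty$ along $S$, the lemma applied to each side forces $\int g\,d\lambda=\int g\circ F\,d\lambda$.

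Since $g$ was arbitrary in $C_c(F[U])$, the Riesz representation theorem identifies the pushforward $F_{*}(\lambda\restriction U)$ with $\lambda\restriction F[U]$, i.e., $\lambda(F[B])=\lambda(B)$ for every Borel $B\subseteq U$. Extending from Borel to Lebesgue-measurable sets is then a routine completion argument, since $F$ and $F^{-1}$ are homeomorphisms carrying Borel null sets to Borel null sets of measure zero; this gives the full conclusion.
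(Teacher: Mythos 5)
Your proposal follows essentially the same route as the paper: invariance of domain to get a homeomorphism onto an open image, transport of the equidistribution identity through the denominator-preserving bijection along the infinite set of good $k$'s, and a Riesz/regularity argument to pass from compactly supported continuous functions to all Lebesgue measurable sets. The equidistribution lemma you defer as the ``main obstacle'' is precisely the paper's Theorem~\ref{ref11}, proved there by M\"obius inversion over a unit cube and multiplicativity of the resulting error ratio, and your worry about uniformity along $S$ is moot, since a limit over all $k\to\infty$ automatically restricts to any infinite subsequence.
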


Let $g(k)$ be the number of rational points of denominator $k$ in the half-open cube $(0,1]^n$. 
If $f$ is a Riemann-integrable complex-valued function defined on a subset of $\Rbb^n$ then, by definition, $f$ is bounded and has compact support (the latter being the closure of $\set{u:f(u)\not=0}$). We then consider $f$ as defined on all of $\Rbb^n$, by setting
$f(u)=0$ for $u\notin\dom(f)$. With this understanding, 
Theorem~\ref{ref1} is a consequence of the following fact.

\begin{theorem}
For\labell{ref11} every Riemann-integrable function $f$ we have
\begin{equation}\tag{$\ast$}
\int_{\Rbb^n}f\,d\bar x=\lim_{k\to\infty}\frac{1}{g(k)}\sum_{\den(u)=k}f(u).
\end{equation}
\end{theorem}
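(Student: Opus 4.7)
I would verify $(\ast)$ first for indicator functions of axis-aligned boxes, extend to step functions by linearity, and pass to general Riemann-integrable $f$ via the standard Darboux sandwich. In effect one is showing that the normalized counting measures $g(k)^{-1}\sum_{\den(u)=k}\delta_u$ converge to Lebesgue measure in a sense strong enough for Riemann integration.

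\textbf{The key count.} Fix an axis-aligned box $B\subset\Rbb^n$ and set $N_d(B)=\#((1/d)\Zbb^n\cap B)$. Elementary lattice-point counting gives $N_d(B)=d^n\lambda(B)+O(d^{n-1})$, with constant depending only on $B$. Since every $u\in(1/k)\Zbb^n$ has a unique $\den(u)$ dividing $k$,
\[
N_k(B)=\sum_{d\mid k}\#\{u\in B:\den(u)=d\},
\]
so M\"obius inversion yields
\[
\#\{u\in B:\den(u)=k\}=\sum_{d\mid k}\mu(k/d)\,N_d(B)=\lambda(B)\,g(k)+O(\sigma_{n-1}(k)),
\]
using that $g(k)=\sum_{d\mid k}\mu(k/d)d^n$ is the Jordan totient $J_n(k)$. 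For $n\ge2$, $g(k)\ge k^n/\zeta(n)$ dominates the error $\sigma_{n-1}(k)=O(k^{n-1+\varepsilon})$; for $n=1$, $g(k)=\phi(k)\gg k/\log\log k$ dominates $\sigma_0(k)=d(k)=O(k^{\varepsilon})$. Hence $(\ast)$ holds for $f=\one_B$, and by linearity for every step function (finite combination of indicators of boxes).

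\textbf{Darboux sandwich.} Given $f$ Riemann-integrable with support in some fixed compact box, for each $\varepsilon>0$ choose step functions $s_-\le f\le s_+$ with $\int(s_+-s_-)<\varepsilon$. Applying the previous paragraph to $s_\pm$,
\[
\int s_-\le\liminf_k\frac{1}{g(k)}\sum_{\den(u)=k}f(u)\le\limsup_k\frac{1}{g(k)}\sum_{\den(u)=k}f(u)\le\int s_+,
\]
and since $\int s_-\le\int f\le\int s_+$, letting $\varepsilon\to 0$ delivers $(\ast)$. The main technical point throughout is ensuring the lattice-counting error is genuinely negligible compared to $g(k)$; the delicate case is $n=1$, where one must invoke the classical lower bound $\phi(k)\gg k/\log\log k$ to outweigh the divisor function $d(k)$.
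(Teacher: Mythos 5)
Your proposal is correct. The global skeleton coincides with the paper's: establish $(\ast)$ for indicators of boxes via M\"obius inversion of the count of $(1/d)\Zbb^n$-points, pass to step functions by linearity, and finish with the standard two-sided (Darboux) approximation of a Riemann-integrable function. The genuine difference lies in how the error term is shown to be $o\bigl(g(k)\bigr)$. You use the crude lattice-point estimate $N_d(B)=d^n\lambda(B)+O(d^{n-1})$, bound the inverted error by $\sigma_{n-1}(k)$, and then import classical growth facts: $J_n(k)\ge k^n/\zeta(n)$ for $n\ge2$, $\phi(k)\gg k/\log\log k$ for $n=1$, and $d(k)=O(k^{\varepsilon})$; all of these are standard and your case split is sound, the $n=1$ case being, as you note, the only delicate one. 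The paper instead expands $\floor{dl_i}=dl_i-\set{dl_i}$ exactly, keeps the factor $\abs{\mu(k/d)}$ in the error, and observes that the ratio $m(k)=\sum_{d|k}\abs{\mu(k/d)}d^{n-1}\big/\sum_{d|k}\mu(k/d)d^n$ is multiplicative, so it suffices to compute it at prime powers, where $m(p^e)\le 4p^{-e}\to0$. The payoff of the paper's route is that it is entirely self-contained and uniform in $n$ (no lower bound for $\phi$, no divisor-function estimate, no case distinction); the payoff of yours is conceptual economy, since the box-count asymptotic with an $O(d^{n-1})$ boundary term is immediate and the remaining work is delegated to well-known estimates. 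Either way the extension to all Riemann-integrable $f$ is the same routine step.
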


Theorem~\ref{ref11} has another corollary, which is interesting in its own right since it extends to an $n$-dimensional setting the old finding that the Farey enumeration of all rational numbers in $\oi$ if uniformly distributed (see \cite{niederreiter73}, \cite[p.~136]{kuipersnie74}, \cite{kessebohmerstr10} and references therein).

\begin{theorem}
Let\labell{ref2} $u_1,u_2,u_3,\ldots$ be an enumeration without repetitions of all rational points in the half-open cube $(0,1]^n$. Assume that $r\le s$ implies $\den(u_r)\le\den(u_s)$. Then the sequence $(u_r)$ is uniformly distributed.
The same statement holds if $(0,1]^n$ is replaced by the closed cube $\oi^n$.
\end{theorem}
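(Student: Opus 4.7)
The plan is to reduce to Theorem~\ref{ref11} via weighted Ces\`aro averaging. Set $G(k):=\sum_{j=1}^{k}g(j)$; by the monotone-denominator hypothesis, the first $G(k)$ terms of the enumeration are precisely the rational points of denominator at most $k$ in $(0,1]^n$. By the standard characterization of uniform distribution in terms of Riemann-integrable test functions, the goal is to establish
\[
\frac{1}{N}\sum_{r=1}^{N}f(u_r)\longrightarrow\int_{\Rbb^n}f\qquad(N\to\infty)
\]
for every Riemann-integrable $f$ with support in $\oi^n$.

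Fix such an $f$ and set $a_j:=g(j)^{-1}\sum_{\den(u)=j}f(u)$. Theorem~\ref{ref11} gives $a_j\to\int f$. Since $g(j)\ge 1$ for every $j\ge1$ and $G(k)\to\infty$, Toeplitz's weighted-average lemma yields
\[
\frac{1}{G(k)}\sum_{r=1}^{G(k)}f(u_r)\;=\;\frac{1}{G(k)}\sum_{j=1}^{k}g(j)\,a_j\;\longrightarrow\;\int f,
\]
which is the desired convergence along the subsequence $N=G(k)$.

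To handle arbitrary $N$, I would reduce to the case $f\ge 0$ by splitting into positive and negative parts and then sandwich: for $G(k)\le N<G(k+1)$,
\[
\frac{1}{G(k+1)}\sum_{r=1}^{G(k)}f(u_r)\;\le\;\frac{1}{N}\sum_{r=1}^{N}f(u_r)\;\le\;\frac{1}{G(k)}\sum_{r=1}^{G(k+1)}f(u_r).
\]
Both bounds converge to $\int f$ as soon as $G(k+1)/G(k)\to 1$, equivalently $g(k+1)/G(k)\to 0$. This is the main technical point; it follows from the elementary bound $g(k)\le k^n$ and from the lower bound $G(k)\ge\sum_{p\le k,\,p\text{ prime}}(p^n-1)$, the latter growing like $k^{n+1}/\log k$ by the prime number theorem. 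Sharper asymptotics such as $G(k)\sim k^{n+1}/\bigl((n+1)\zeta(n+1)\bigr)$ are available but superfluous here.

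Finally, the $\oi^n$ version follows because the additional rational points all lie on some coordinate hyperplane $\{x_i=0\}$; an easy induction on $n$ shows that their number of denominator at most $k$ is $O(k^n)$, which is negligible next to $G(k)$. Adjoining them to the enumeration therefore does not affect any of the limits computed above, and uniform distribution is preserved.
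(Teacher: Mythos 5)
Your argument is correct and follows essentially the same route as the paper: group the enumeration into blocks of constant denominator, feed each block to Theorem~\ref{ref11}, and pass from convergence along the block endpoints $N=t(k)$ (your $G(k)$; beware that the paper reserves $G(k)$ for the number of denominator-$k$ points in the \emph{closed} cube) to full Ces\`aro convergence using the fact that a single block is negligible against the cumulative count. The differences are only in the supporting steps, and they are legitimate substitutes. Where the paper invokes Kuipers--Niederreiter's Lemma~2.4.1 twice (its implications (1)$\Rightarrow$(3) and (3)$\Rightarrow$(2)), you use a Toeplitz weighted-mean argument plus a direct sandwich for $f\ge0$ after splitting into real/imaginary and positive/negative parts; where the paper's Lemma~\ref{ref4} deduces $g(k+1)/t(k)\to0$ from Murty's estimates on the Jordan totient, you deduce it from $g(k)\le k^n$ and $t(k)\ge\sum_{p\le k}(p^n-1)\gg k^{n+1}/\log k$, using $g(p)=p^n-1$ from Lemma~\ref{ref3}; this is elementary and even Chebyshev's bound would suffice. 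For the closed cube the paper argues blockwise via $G(k+1)/T(k)\to0$ and $g(k)/G(k)\to1$, while you discard the boundary points; your final sentence is stated a bit loosely, but it is easily made precise: the interior points among $u_1,\dots,u_N$ form, in order, the first $N'$ terms of an admissible enumeration of $(0,1]^n$, your counting bound gives $N'/N\to1$ since the boundary points of denominator at most $k+1$ number $O(k^n)$ while $N\ge t(k)\gg k^{n+1}/\log k$, and the boundary contribution is at most $\norm{f}_\infty(N-N')/N\to0$. With these routine additions the proof is complete; I see no gap.
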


In Theorem~\ref{ref1} we are not assuming any regularity for the map $F$. In \S\ref{ref10} we will show that if $F$ is differentiable at a point $u$, then the Jacobian matrix w.r.t.~the standard basis is in $\GL_n\Zbb$ (this does not force $F$ to be affine in a neighborhood of $u$). As a corollary we get a version of Theorem~\ref{ref1} for bilipschitz maps, namely Corollary~\ref{ref12}, whose proof does not depend on Theorem~\ref{ref11}.

\section{Proof of Theorem~\ref{ref11}}

The function $g(k)$ counting the number of points of denominator $k$ in $(0,1]^n$ is Jordan's generalized
totient~\cite[p.~11]{murty08}, which reduces to Euler's totient for $n=1$.

\begin{lemma}
Denoting\labell{ref3} the M\"obius function by $\mu$, we have
$$
g(k)=\sum_{d|k}\mobiusfrac{k}{d}d^n.
$$
\end{lemma}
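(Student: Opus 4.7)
The plan is the standard Möbius-inversion derivation of Jordan's totient, adapted to this multi-dimensional denominator. The key observation is that the denominator of $(a_1/k,\ldots,a_n/k)$ is a divisor of $k$, so partitioning the full $k^n$ grid of points by their actual denominator yields a clean summation identity amenable to inversion.

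First I would set up the bijection: for each $k\ge1$, the set $\set{1,\ldots,k}^n$ has exactly $k^n$ elements, and each tuple $(a_1,\ldots,a_n)$ in it produces the point $u=(a_1/k,\ldots,a_n/k)\in(0,1]^n$. Letting $d=\den(u)$, write $u=(b_1/d,\ldots,b_n/d)$ with $\gcd(b_1,\ldots,b_n,d)=1$; then $d\mid k$ and $a_i=(k/d)b_i$ is uniquely recovered from $u$ and $k$. Conversely, every rational point in $(0,1]^n$ with denominator $d$ dividing $k$ arises in this way from exactly one tuple. This gives the partition
\begin{equation*}
k^n=\sum_{d\mid k}g(d).
\end{equation*}

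Next I would apply Möbius inversion to this identity. Since it holds for every $k\ge1$, the standard inversion formula yields
\begin{equation*}
g(k)=\sum_{d\mid k}\mobiusfrac{k}{d}d^n,
\end{equation*}
which is the claim.

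There is essentially no obstacle here; the only thing to be careful about is the correspondence between lattice tuples and rational points, in particular verifying that every denominator appearing is a divisor of $k$ and that the map from tuples to points is a bijection onto the union $\bigcup_{d\mid k}\set{u\in(0,1]^n:\den(u)=d}$. Once that bookkeeping is done, Möbius inversion does the rest.
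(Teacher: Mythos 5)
Your proposal is correct and follows exactly the paper's own argument: count the $k^n$ points of the form $(a_1/k,\ldots,a_n/k)$ in $(0,1]^n$, observe they partition by denominator $d\mid k$ to get $k^n=\sum_{d\mid k}g(d)$, and apply M\"obius inversion. The extra bookkeeping you include about the tuple-to-point bijection is a harmless elaboration of the same step.
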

\begin{proof}
The number of rational points in $(0,1]^n$ that can be written in the form $(a_1/k,\ldots,a_n/k)$, with $\vect a1n,k$ not necessarily relatively prime, is $k^n$. Since the set of such points is the disjoint union of the sets of points having denominator $d|k$, we also have $k^n=\sum_{d|k}g(d)$, and we apply the M\"obius inversion formula.
\end{proof}

Given $u=(\vect t1n),v=(\vect l1n)\in\Rbb^n$, we write $u\le v$ (respectively, $u<v$) if $t_i\le l_i$ (respectively, $t_i < l_i$) for every $i$.
We denote the interval $\{x\in\Rbb^n:u<x\le v\}$ by $(u,v]$, and we first establish the identity ($\ast$) in the statement of Theorem~\ref{ref11} for all characteristic functions $f=\one_{(u,v]}$.
By partitioning $(u,v]$ in finitely many subintervals of the form $(u,v]\cap\bigl(w+(0,1]^n\bigr)$, with $w\in\Zbb^n$, we  assume without loss of generality that $u,v\in(0,1]^n$. Also, using the inclusion-exclusion principle it is easy to see that it suffices to establish ($\ast$) for intervals of the form $(0,v]$. Let then $v=(\vect l1n)\in(0,1]^n$ and $L=\prod l_i=\int\one_{(0,v]}\,d\bar x$.
Also, let $h(k)$ be the number of points of denominator $k$ in $(0,v]$; we have to prove that $\lim_{k\to\infty}h(k)/g(k)=L$.

The number of rational points in $(0,v]$ of the form $(a_1/k,\ldots,a_n/k)$, with $\vect ain,k$ not necessarily relatively prime, is $\floor{kl_1}\cdots\floor{kl_n}=\sum_{d|k}h(d)$. By M\"obius inversion we get
\begin{align*}
h(k)
&=\sum_{d|k}\mobiusfrac{k}{d}\floor{dl_1}\cdots\floor{dl_n} \\
&=\sum_{d|k}\mobiusfrac{k}{d}\big(dl_1-\set{dl_1}\big)
\cdots\big(dl_n-\set{dl_n}\big),
\end{align*}
where $\set{dl_i}=dl_i-\floor{dl_i}$ is the fractional part of $dl_i$. Now
\begin{equation*}
\big(dl_1-\set{dl_1}\big)
\cdots\big(dl_n-\set{dl_n}\big)=
d^nL+\sum_{\emptyset\not= J\subseteq\set{1,\ldots,n}}(-1)^{\abs{J}}
d^{n-\abs{J}}\prod_{\substack{j\in J\\ i\notin J}}l_i\set{dl_j},
\end{equation*}
and hence
$$
h(k)=Lg(k)+\sum_{d|k}\mobiusfrac{k}{d}\Biggl[\sum_{\emptyset\not= J}
(-1)^{\abs{J}}d^{n-\abs{J}}\prod_{\substack{j\in J\\ i\notin J}}
l_i\set{dl_j}\Biggr].
$$
Dividing by $g(k)$ and applying the triangle inequality we get
\begin{multline*}
\biggl\lvert\frac{h(k)}{g(k)}-L\biggr\rvert \le 
\frac{1}{g(k)}\sum_{d|k}\biggl\lvert\mobiusfrac{k}{d}\biggr\rvert
\biggl(\sum_{\emptyset\not= J}d^{n-\abs{J}}\biggr)\\
=\frac{1}{g(k)}\sum_{d|k}\biggl\lvert\mobiusfrac{k}{d}\biggr\rvert
\biggl(\sum_{t=1}^n \binom{n}{t} d^{n-t}\biggr)
\le
\frac{M}{g(k)}\sum_{d|k}\biggl\lvert\mobiusfrac{k}{d}\biggr\rvert
d^{n-1},
\end{multline*}
for some $M>0$, depending on $n$ only.

Hence it suffices to show that
$$
m(k)=\frac{\displaystyle{\sum_{d|k}
\biggl\lvert\mobiusfrac{k}{d}\biggr\rvert d^{n-1}}}
{\displaystyle{\sum_{d|k}\mobiusfrac{k}{d}d^n}}
$$
tends to $0$ as $k$ tends to infinity. Since $m$ is multiplicative, we can check this for $k$ assuming prime-power values~\cite[Theorem~316]{hardywri85}. We have
\begin{align*}
m(p^e)&=\frac{(p^e)^{n-1}+(p^{e-1})^{n-1}}{(p^e)^n-(p^{e-1})^n}\\
&=\frac{p^{en-e}+p^{en+1-e-n}}{p^{en}-p^{en-n}}\cdot
\frac{p^{n+e-en}}{p^{n+e-en}}\\
&=\frac{p^n+p}{p^{n+e}-p^e}\\
&=\frac{p(p^{n-1}+1)}{(p-1)(p^{n-1}+p^{n-2}+\cdots+1)}\cdot
\frac{1}{p^e}\\
&\le 4\cdot\frac{1}{p^e},
\end{align*}
that tends to $0$ as $p^e$ tends to infinity.

This establishes~($\ast$) for the characteristic functions of intervals, and it is clear that~($\ast$) must then hold for all finite linear combinations of such characteristic functions.
The extension of~($\ast$) to all Riemann-integrable functions is now straightforward (see, e.g., the proof of~\cite[Theorem~1.1.1]{kuipersnie74}).

\section{Proof of Theorem~\ref{ref1}}

Let $F,U$ be as in the statement of Theorem~\ref{ref1}.
As a consequence of Brouwer's invariance of domain theorem~\cite[p.~217]{massey91}, $V=F[U]$ is open in $\Rbb^n$ and $F:U\to V$ is a homeomorphism. 
By our assumptions on $F$, there exists a sequence $k_1<k_2<k_3<\cdots$ such that $F$ induces a bijection between the points of denominator $k_i$ in $U$ and those in $V$, for every $i$. By Theorem~\ref{ref11}, for every Riemann-integrable function $f$ whose support is contained in $V$ we have
\begin{align*}
\int_{\Rbb^n}f\,d\bar x&=
\lim_{i\to\infty}\frac{1}{g(k_i)}
\sum_{\substack{v\in V\\ \den(v)=k_i}}f(v)\\
&=
\lim_{i\to\infty}\frac{1}{g(k_i)}
\sum_{\substack{u\in U\\ \den(u)=k_i}}(f\circ F)(u)
=
\int_{\Rbb^n}f\circ F\,d\bar x.
\end{align*}
Let $W$ be an open subset of $V$. By the construction of the Lebesgue measure $\lambda$~\cite[proof of Theorem~2.14]{rudin87}, $\lambda(W)$ is the least upper bound of the values $\int f\,d\bar x$, where $f$ ranges in the set of all continuous, $\oi$-valued functions supported in $W$. Since this set of functions is in 1-1 correspondence ---via postcomposition with $F$--- with the analogous set of functions supported in $F^{-1}W$, the identity displayed above yields 
$\lambda(F^{-1}W)=\lambda(W)$.
By~\cite[Theorem~2.14(c)]{rudin87}, 
$\lambda(F^{-1}A)=\lambda(A)$ for every $A\subseteq V$ such that both $A$ and $F^{-1}A$ are $\lambda$-measurable, in particular for every Borel set. By~\cite[Theorem~2.17]{rudin87}, 
$A\subseteq\Rbb^n$ is $\lambda$-measurable iff there exist $B\in F_\sigma$, $C\in G_\delta$ with $B\subseteq A\subseteq C$ and $\lambda(C\setminus B)=0$; if this happens, $\lambda(A)=\lambda(C)$. Let then $A\subseteq V$ be $\lambda$-measurable, $B$ and $C$ be as above with $C\subseteq V$. Then $F^{-1}B\subseteq F^{-1}A\subseteq F^{-1}C$, with $F^{-1}B\in F_\sigma$ and $F^{-1}C\in G_\delta$. Since $C\setminus B$ is Borel, we have by the above $\lambda(F^{-1}C\setminus F^{-1}B)=
\lambda\bigl(F^{-1}(C\setminus B)\bigr)=\lambda(C\setminus B)=0$; hence $F^{-1}A$ is $\lambda$-measurable and $\lambda(F^{-1}A)=
\lambda(F^{-1}C)=\lambda(C)=\lambda(A)$.
The same argument applies to $F^{-1}$, so the $\lambda$-measurability is preserved in both directions.

\section{Proof of Theorem~\ref{ref2}}

Let $X$ denote either the half-open cube $(0,1]^n$ of the closed cube $\oi^n$. We recall that a sequence $(u_r)_{r\in\Nbb}$ in $X$ is \newword{uniformly distributed} if for every Riemann-integrable function $f:X\to\Cbb$ we have
$$
\lim_{s\to\infty}
\frac{1}{s}\sum_{r=1}^s f(u_r)=
\int_Xf\,d\bar x.
$$

We let $G(k)$ denote the number of rational points of denominator $k$ in $\oi^n$; as in Lemma~\ref{ref3} we have
$$
G(k)=\sum_{d|k}\mobiusfrac{k}{d}(d+1)^n.
$$
We need a few facts about the growth rate of $g(k)$ and $G(k)$; the following estimates are known~\cite[Exercises~1.3.4, 1.3.5, 1.5.3]{murty08}:
\begin{itemize}
\item[(a)] for $n=1$, there exist positive constants $C_1,C_2$ such that
$$
C_1\frac{k}{1+\log k}\le g(k)\le C_2k;
$$
\item[(b)] for $n\ge2$, there exist positive constants $C_1,C_2$ (depending on $n$) such that
$$
C_1k^n\le g(k)\le C_2k^n.
$$
\end{itemize}
Let $t(k)=g(1)+g(2)+\cdots+g(k)$ and $T(k)=G(1)+G(2)+\cdots+G(k)$.

\begin{lemma}
We\labell{ref4} have
$$
\lim_{k\to\infty}\frac{g(k+1)}{t(k)}
=0,
$$
and analogously for $G,T$.
Also, $\lim_{k\to\infty}g(k)/G(k)=1$.
\end{lemma}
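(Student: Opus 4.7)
The plan is to establish the three claims in an order that lets one bootstrap: I would first prove $g(k)/G(k)\to 1$, since once $G(k)\asymp g(k)$ is in hand the growth estimates (a) and (b) carry over verbatim from $g$ to $G$, hence from $t$ to $T$, so the same argument handles both the $g,t$ and $G,T$ versions of the first limit.

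For the ratio $g(k)/G(k)$, I would use the Möbius formulas $g(k)=\sum_{d\mid k}\mu(k/d)d^n$ and $G(k)=\sum_{d\mid k}\mu(k/d)(d+1)^n$ to write $G(k)-g(k)=\sum_{d\mid k}\mu(k/d)\bigl[(d+1)^n-d^n\bigr]$. For $n=1$ this collapses to $\sum_{d\mid k}\mu(k/d)$, which by the standard Möbius identity equals $1$ when $k=1$ and $0$ otherwise; thus $G(k)=g(k)$ for every $k\ge 2$, and the ratio is identically $1$ eventually. For $n\ge 2$, I would bound $|G(k)-g(k)|\le\sum_{d\mid k}\bigl[(d+1)^n-d^n\bigr]\le n\,d(k)(k+1)^{n-1}$, where $d(k)$ denotes the number of divisors of $k$; since $d(k)=O(k^{\varepsilon})$ for every $\varepsilon>0$ this is $o(k^n)$, and comparison with $g(k)\ge C_1k^n$ from (b) yields the limit.

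For $g(k+1)/t(k)\to 0$, I would split by dimension. When $n\ge 2$, the estimate $g(j)\ge C_1j^n$ gives $t(k)\ge C_1\sum_{j=1}^k j^n$, of order $k^{n+1}$, while $g(k+1)\le C_2(k+1)^n$, so the ratio is $O(1/k)$. The delicate case is $n=1$, where $g(j)$ only satisfies the weaker lower bound $C_1 j/(1+\log j)$. Here the trick is to discard the bottom half of the sum and estimate $t(k)\ge\sum_{\lceil k/2\rceil\le j\le k}g(j)\ge (k/2)\cdot C_1(k/2)/(1+\log k)$, i.e.\ $t(k)\gtrsim k^2/\log k$, which against $g(k+1)=O(k)$ gives a ratio of order $(\log k)/k\to 0$. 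The $G,T$ versions follow from the same arguments, now with (a) and (b) applied to $G$ via the previous step.

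The main obstacle, and the only place that requires care, is the $n=1$ ratio: the $\log k$ factor in the lower bound of (a) makes a plain summation estimate too lossy, so one must observe that enough of the mass of $t(k)$ is already concentrated in the upper half of the range $[1,k]$ to beat the almost-linear upper bound on $g(k+1)$.
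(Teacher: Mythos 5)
Your proof is correct, but it reaches the $G,T$ statements and the ratio $g(k)/G(k)\to1$ by a genuinely different route than the paper. The paper never touches the M\"obius expansion of $G(k)-g(k)$: instead it uses the geometric face-counting inequality $G_n(k)\le g_n(k)+nG_{n-1}(k)$ (every denominator-$k$ point of $\oi^n$ lies in $(0,1]^n$ or on one of the $n$ faces $\set{x_i=0}$), from which both $G(k+1)/T(k)\to0$ and, combined with the trivial bound $G_{n-1}(k)\le(k+1)^{n-1}$, the limit $g_n(k)/G_n(k)\to1$ follow directly; the $g,t$ claim is handled first, from (a), (b) and the power-sum asymptotic $1^n+\cdots+k^n=\frac{1}{n+1}k^{n+1}+O(k^n)$, essentially as in your split (your lower-half-discarding trick for $n=1$ is the content of the paper's ``follows easily''). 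You instead bootstrap in the opposite order: first $G(k)-g(k)=\sum_{d\mid k}\mu(k/d)\bigl[(d+1)^n-d^n\bigr]$, which for $n=1$ gives the pleasant exact identity $G(k)=g(k)$ for $k\ge2$, and for $n\ge2$ is $O\bigl(d(k)\,k^{n-1}\bigr)=o(k^n)$ via a divisor bound (note $d(k)\le 2\sqrt{k}$ already suffices, so you need not invoke $d(k)=O(k^\varepsilon)$); then (a), (b) transfer to $G$ and the block-sum limits follow uniformly. What your approach buys is independence from the dimensional recursion and a sharper quantitative handle on $G-g$; what the paper's buys is that it stays entirely elementary and geometric, with no divisor-function input. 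Both are complete proofs.
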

\begin{proof}
It is known~\cite[p.~155]{comtet74} that
$$
1^n+2^n+\cdots+k^n=\frac{1}{n+1}k^{n+1}+O(k^n).
$$
The claim for $g$ follows then easily from (a) and (b) above.
As we now need specify the dimension $n$, we will write $g_n$ for $g$, and analogously for $G$, $t$, $T$, till the end of the proof.
Since the vertex $(0,0,\ldots,0)$ is contained in $n$ maximal faces of the unit cube $\oi^n$, we have $G_n(k)\le g_n(k)+nG_{n-1}(k)$ (we set $G_0$ to be identically $1$). Therefore
$$
\frac{G_n(k+1)}{T_n(k)}
\le
\frac{g_n(k+1)+nG_{n-1}(k+1)}{t_n(k)},
$$
and the latter tends to $0$ as $k$ tends to infinity.

For the last statement we observe that $G_n(k)\le g_n(k)+nG_{n-1}(k)$ implies
$$
1-n\frac{G_{n-1}(k)}{G_n(k)}\le
\frac{g_n(k)}{G_n(k)}<1.
$$
Also, we have the trivial bound
$$
\frac{G_{n-1}(k)}{G_n(k)}\le
\frac{(k+1)^{n-1}}{g_n(k)},
$$
whence $\lim_{k\to\infty}G_{n-1}(k)/G_n(k)=0$ by (a) and (b).
\end{proof}

Let $(\alpha_r)_{r\in\Nbb}$ be any element of $\ell_\infty(\Cbb)$, 
and let $\beta\in\Cbb$. Three summation methods are relevant for us:
\begin{itemize}
\item[(1)] \newword{block convergence}
$$
\lim_{k\to\infty}\frac{1}{g(k)}\sum_{r=t(k-1)+1}^{t(k)}\alpha_r=\beta;
$$
\item[(2)] \newword{Ces\`aro convergence}
$$
\lim_{s\to\infty}\frac{1}{s}\sum_{r=1}^s\alpha_r=\beta;
$$
\item[(3)] \newword{blockwise Ces\`aro convergence}
$$
\lim_{k\to\infty}\frac{1}{t(k)}\sum_{r=1}^{t(k)}\alpha_r=\beta.
$$
\end{itemize}
Clearly (2)\To(3), and (1)\To(3) is an easy consequence of a theorem by Cauchy~\cite[Lemma~2.4.1]{kuipersnie74}. Since, as proved in Lemma~\ref{ref4}, the ratio $g(k+1)/t(k)$ tends to $0$,
we have (3)\To(2) by~\cite[Lemma~2.4.1]{kuipersnie74}. Returning to the proof of Theorem~\ref{ref2}, assume first $X=(0,1]^n$ and let $(u_r)$ be a sequence as in the statement of Theorem~\ref{ref2}. Fix a Riemann-integrable function $f$ on $X$, let $\alpha_r=f(u_r)$ and $\beta=\int_X f\,d\bar x$. The points $\set{u_r:t(k-1)<r\le t(k)}$ are precisely the points of denominator $k$ in $X$, in some order, so (1) holds by Theorem~\ref{ref11}. Therefore (2) holds as well and Theorem~\ref{ref2} is proved for the half-open cube.

Take now $X=\oi^n$, and let $(u_r)$, $f$, $(\alpha_r)$, $\beta$ be defined as above with the obvious modifications. By Lemma~\ref{ref4} $G(k+1)/T(k)\to0$, so the above discussion holds verbatim and we only need to prove
$$
\lim_{k\to\infty}\frac{1}{G(k)}\sum_{r=T(k-1)+1}^{T(k)}\alpha_r=\beta.
$$
Without loss of generality the first $g(k)$ points in $u_{T(k-1)+1},\ldots,u_{T(k)}$ are in $(0,1]^n$, and the remaining ones in $\oi^n\setminus(0,1]^n$. We thus get
$$
\frac{1}{G(k)}\sum_{r=T(k-1)+1}^{T(k)}\alpha_r
=\frac{g(k)}{G(k)}\cdot
\frac{1}{g(k)}\sum_{r=T(k-1)+1}^{T(k-1)+g(k)}\alpha_r+
\frac{1}{G(k)}\sum_{r=T(k-1)+g(k)+1}^{T(k)}\alpha_r.
$$
For $k$ tending to infinity $g(k)/G(k)$ tends to $1$ by Lemma~\ref{ref4}, hence the first summand to the right-hand side tends to $\beta$ as proved above. On the other hand the second summand tends to $0$, because its absolute value is dominated by $\norm{f}_\infty\bigl(G(k)-g(k)\bigr)/G(k)$. This concludes the proof of Theorem~\ref{ref2}.

\begin{remark}
In general block convergence is strictly stronger than blockwise Ces\`aro convergence; examples are easily constructed. It is precisely the stronger property (1) allowing us to ask for preservation of many denominators ---rather than all denominators--- in Theorem~\ref{ref1}. This turns out to be handy, e.g., in Lemma~\ref{ref6} below. Equidistribution results for rational points in algebraic varieties are usually formulated in terms of blockwise
Ces\`aro convergence. Points are sorted according to their height, and the resulting measure is the finite Tamagawa measure; see the survey~\cite{peyre03}.
\end{remark}

\section{Translations and differentiability}\labell{ref10}

Translations by rational vectors preserve many denominators.

\begin{lemma}
Let\labell{ref6} $v=d^{-1}(\vect a1n)\in\Qbb^n$ have denominator $d$, let $U\subseteq\Rbb^n$ be open and let $T_v:U\to\Rbb^n$ be the translation by $v$. Then:
\begin{itemize}
\item[(i)] $T_v$ preserves every denominator $k$ such that $d\rad(d)|k$ ($\rad(d)$ being the product of all prime factors of $d$, each taken with exponent $1$);
\item[(ii)] $T_v$ preserves all denominators iff $v\in\Zbb^n$.
\end{itemize}
\end{lemma}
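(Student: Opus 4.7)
The plan is to settle (i) by direct arithmetic on a common denominator, and then to deduce (ii) by exhibiting an explicit witness denominator for the nontrivial implication.

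For (i), take $u = k^{-1}(b_1,\ldots,b_n) \in U$ with $\gcd(b_1,\ldots,b_n,k)=1$. Since $d \mid k$, set $m = k/d \in \Zbb$ and write
\[
u+v = k^{-1}(b_1 + m a_1,\ldots,b_n + m a_n),
\]
so that $\den(u+v) = k/\gcd(b_1 + m a_1,\ldots,b_n + m a_n,k)$. The heart of the argument is to show this gcd equals~$1$: for any prime $p$ simultaneously dividing $k$ and each $b_i + m a_i$, I argue that $p \mid m$, splitting into the cases $p \nmid d$ (where $p \mid k = dm$ and coprimality give $p \mid m$) and $p \mid d$ (where $d\,\rad(d) \mid k$ makes the $p$-adic valuation of $k$ strictly exceed that of $d$, so again $p \mid m$). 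Once $p \mid m$, the congruences $p \mid b_i + m a_i$ force $p \mid b_i$ for every $i$, contradicting $\gcd(b_1,\ldots,b_n,k)=1$. Hence $\den(u+v)=k$. The same reasoning applied to $T_{-v}$ --- note that $-v$ also has denominator $d$ --- provides the inverse on denominator-$k$ points, giving the required bijection.

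For (ii), the reverse implication is immediate from (i): when $v \in \Zbb^n$ we have $d=1$ and $d\,\rad(d)=1$ divides every $k$. For the forward implication, assume $v \notin \Zbb^n$, so $d \geq 2$, and look for a single denominator not preserved. Choose a prime $p > d$, which makes $\gcd(p,d)=1$ and $d \nmid p$. For any $u = p^{-1}(b_1,\ldots,b_n) \in U$ with $\den(u)=p$, clearing denominators gives
\[
u+v = (pd)^{-1}(d b_1 + p a_1,\ldots,d b_n + p a_n);
\]
asking $\den(u+v) = p$ forces $d \mid d b_i + p a_i$, hence $d \mid p a_i$, hence (since $\gcd(p,d) = 1$) $d \mid a_i$ for every $i$, whence $d \mid \gcd(a_1,\ldots,a_n,d) = 1$, contradicting $d \geq 2$. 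So every denominator-$p$ point of $U$ maps outside the denominator-$p$ points of $T_v[U]$ and the required bijection fails.

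The one auxiliary input needed is that for every nonempty open $U$ there exist arbitrarily large primes $p$ such that $U$ contains a point of denominator $p$: once $1/p$ is smaller than the radius of some ball contained in $U$, that ball must meet $p^{-1}\Zbb^n \setminus \Zbb^n$, and for $p$ prime those are exactly the points of denominator $p$. The only real subtlety is the prime-by-prime bookkeeping in~(i) --- which is precisely what the factor $\rad(d)$ in the hypothesis is calibrated to absorb.
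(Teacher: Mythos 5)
Your proof is correct and takes essentially the same route as the paper's: part (i) is the same prime-by-prime argument (the case split $p\nmid d$ versus $p\mid d$, with $d\rad(d)\mid k$ supplying the extra factor of $p$), phrased with gcd's instead of $p$-adic valuations and giving $\den(T_v(u))=k$ directly instead of the paper's divisibility-plus-symmetry step, while part (ii) likewise exhibits a point whose denominator changes, your witness being a point of large prime denominator $p>d$ where the paper uses a point whose denominator is coprime to a chosen prime factor of $d$. The only quibble is your auxiliary density claim: for a Euclidean ball one needs $1/p$ small compared to the radius divided by $\sqrt{n}$ (or one should work with a cube) to guarantee a point of $p^{-1}\Zbb^n\setminus\Zbb^n$, but since you only need arbitrarily large primes this is immaterial.
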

\begin{proof}
Denote by $\op:\Qbb\to\Zbb\cup\set{+\infty}$ the $p$-adic valuation (i.e., $\op(l)$ is the exponent at which the prime $p$ appears in the unique factorization of $l\in\Qbb\setminus\set{0}$).
Let $u=k^{-1}(\vect b1n)\in U$ have denominator $k$, the latter being a multiple of $d\rad(d)$. We will show that $k|\den(T_v(u))$. 
Since 
$\den(v)=\den(-v)$, it will follow that
both $T_v$ and $T_v^{-1}$ map points whose denominator is a multiple $k$ of $d\rad(d)$ to points whose denominator is a multiple of $k$, so that both $T_v$ and $T_v^{-1}$ must preserve such denominators. Let then $p|k$, with the intent of proving $\op(k)\le\op\bigl(\den(T_v(u))\bigr)$.

For at least one index $i$ we have $p\nmid b_i$, and for that index
$\op(b_i/k)=\op(1/k)<\op(1/d)\le\op(a_i/d)$; the middle inequality is clear if $p\nmid d$, and follows from $d\rad(d)|k$ otherwise. Therefore $\op(a_i/d+b_i/k)=\min\{\op(a_i/d),\op(b_i/k)\}=-\op(k)$,
and hence $0<\op(k)=-\op(a_i/d+b_i/k)\le\op\bigl(\den(T_v(u))\bigr)$.
We thus proved (i); for the nontrivial direction of (ii), assume $v\notin\Zbb^n$. Then some prime $p$ divides $\den(v)$, and the open set $U$ surely contains a point $u$ with $p\nmid\den(u)$. Therefore $p|\den(T_v(u))$, and $T_v$ does not preserve all denominators.
\end{proof}

For the remaining of this paper we will be concerned only with maps that preserve all denominators. Under differentiability assumptions denominator-preserving maps have a rigid structure, as explained in the following theorem.

\begin{theorem}
Let\labell{ref7} $F:U\to\Rbb^n$ be a continuous injective map that preserves all denominators. Assume that $F$ is differentiable at $u$ with Jacobian matrix A w.r.t.~the standard basis of $\Rbb^n$. Then $A\in\GL_n\Zbb$. In particular, if $F$ is continuously differentiable on the open connected region $V\subseteq U$, then $F$ has the form
$F(v)=Av+w$ on $V$, for some fixed $A\in\GL_n\Zbb$ and $w\in\Zbb^n$.
\end{theorem}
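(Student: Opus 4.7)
The plan is to combine the global consequence of Theorem~\ref{ref1} with the denominator-preservation of $F$ read at the infinitesimal level. By Theorem~\ref{ref1}, $F$ preserves Lebesgue measure, so $|\det A|=1$ and, in particular, $A$ is invertible. The core task is to show $A\in\Mat_n(\Zbb)$; applying the same argument to $F^{-1}$ at $F(u)$ (whose Jacobian is $A^{-1}$) then gives $A^{-1}\in\Mat_n(\Zbb)$ and hence $A\in\GL_n\Zbb$.

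To show $Ae_i\in\Zbb^n$ for each standard basis vector $e_i$, I would produce, for infinitely many prime denominators $k$, a pair of rational points $p_k,p_k'\in U$ of exact denominator $k$, each within distance $O(1/k)$ of $u$, with $k(p_k-p_k')=e_i$. Concretely, take $q_k=\lfloor ku\rfloor\in\Zbb^n$ and pick a perturbation $m_k\in\{-1,0,1\}^n$ such that neither tuple $q_k+m_k$ nor $q_k+m_k+e_i$ has all entries divisible by $k$; then set $p_k'=(q_k+m_k)/k$ and $p_k=(q_k+m_k+e_i)/k$, both of denominator exactly $k$. For $k$ prime, each divisibility condition rules out a single residue class of $m_k\bmod k$, so an admissible $m_k\in\{-1,0,1\}^n$ exists once $k$ is large. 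Differentiability at $u$ then yields
\begin{equation*}
F(p_k)-F(p_k')=A(p_k-p_k')+o\bigl(|p_k-u|+|p_k'-u|\bigr)=\frac{Ae_i}{k}+o(1/k),
\end{equation*}
so $k\bigl(F(p_k)-F(p_k')\bigr)=Ae_i+o(1)$. The left-hand side lies in $\Zbb^n$ (both images have denominator $k$), and a convergent sequence in $\Zbb^n$ is eventually constant; so $Ae_i\in\Zbb^n$, and ranging over $i$ gives $A\in\Mat_n(\Zbb)$. The main technical point is precisely this exact-denominator bookkeeping --- everything else is an essentially linear argument.

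For the ``in particular'' statement, continuous differentiability on $V$ produces a continuous map $V\to\GL_n\Zbb$ assigning to each point its Jacobian. Since $\GL_n\Zbb$ is discrete in $\Mat_n(\Rbb)$ and $V$ is connected, this map is constant, equal to some fixed $A$; hence $F(v)-Av$ has vanishing derivative on $V$, so $F(v)=Av+w$ for a constant $w\in\Rbb^n$. To force $w\in\Zbb^n$, I would pick two rational points in $V$ of distinct prime denominators $p\ne q$ (available because the open set $V$ contains rational points of every sufficiently large prime denominator); then $w$ lies in both $\frac{1}{p}\Zbb^n$ and $\frac{1}{q}\Zbb^n$, hence in $\Zbb^n$.
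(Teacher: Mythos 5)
Your proposal is correct, but it reaches the key integrality statement by a genuinely different and more elementary route than the paper. The paper builds a sequence of \emph{unimodular} simplices shrinking to $u$ (via the M\"onkemeyer--Selmer multidimensional continued fraction algorithm) and proves a separate calculus lemma (Lemma~\ref{ref9}) saying that the secant maps $L_{\sigma^t}$ of such simplices converge to the differential; integrality then falls out of the matrix identity $B_t=W_tS_t^{-1}$ with $S_t\in\GL_{n+1}\Zbb$, and discreteness of $\Mat_{n\times n}\Zbb$ finishes. You instead take, for large primes $k$, two rational points near $u$ of exact denominator $k$ differing by $e_i/k$; preservation of denominator $k$ puts $k\bigl(F(p_k)-F(p_k')\bigr)$ in $\Zbb^n$, and differentiability makes it converge to $Ae_i$, so $A\in\Mat_{n\times n}\Zbb$. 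Your residue-class count modulo a large prime for choosing $m_k$ is sound (including $n=1$), and in fact your argument only uses preservation of large prime denominators; what it does not supply is the paper's Lemma~\ref{ref9}, which is reused later for the nowhere-differentiable example in \S\ref{ref8}. Two smaller points. The detour through $F^{-1}$ is superfluous: once $A$ is integral, $\lvert\det A\rvert=1$ from Theorem~\ref{ref1} (exactly the step the paper also takes without further comment) already gives $A\in\GL_n\Zbb$; as written, your detour tacitly relies on invariance of domain and on the standard fact that the inverse homeomorphism is differentiable at $F(u)$ with Jacobian $A^{-1}$ once $A$ is invertible, which you would need to justify or cite. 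Finally, for $w\in\Zbb^n$ the paper invokes Lemma~\ref{ref6}(ii) on rational translations, while your argument with two distinct prime denominators ($w\in p^{-1}\Zbb^n\cap q^{-1}\Zbb^n=\Zbb^n$) is a correct and arguably simpler substitute.
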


In~\S\ref{ref8} we will give an example of a denominator-preserving homeomorphism of $\Rbb^2$ that is differentiable at the origin, but fails to be affine in any neighborhood of the origin. We will also construct a nowhere differentiable denominator-preserving homeomorphism of $\Rbb^2$.

In order to prove Theorem~\ref{ref7} we need a few preliminaries.
Let $\sigma=(\vect v0n)$ denote an ordered $(n+1)$-tuple of vectors in general position in $U$. We say that a sequence $\sigma^1,\sigma^2,\sigma^3,\ldots$ of such tuples, with $\sigma^t=(\vect{v^t}0n)$, \newword{converges to the point $u$} if $u$ is in the convex hull of every $\sigma^t$ and $v^t_i$ converges to $u$ for every $i$. Every $\sigma$ determines a linear map $L_\sigma:\Rbb^n\to\Rbb^n$ by $L_\sigma(v_i-v_j)=F(v_i)-F(v_j)$. 
The following calculus lemma might be known, but we have not been able to find a reference.

\begin{lemma}
Let\labell{ref9} $F$ be any map from an open subset $U$ of $\Rbb^n$ to $\Rbb^n$. Assume that $F$ is differentiable at $u\in U$ with differential $L\in\Lin(\Rbb^n,\Rbb^n)$, and let $(\sigma^t)$ be a sequence converging to $u$ as above, with $v^t_i\in U$ for every $t$ and $i$. Then $L_{\sigma^t}$ converges to $L$ in the operator norm.
\end{lemma}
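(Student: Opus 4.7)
The plan is to express $L_{\sigma^t}-L$ as a matrix product $E_tW_t^{-1}$, where $W_t$ encodes the shape of the simplex $\sigma^t$ and $E_t$ collects the first-order error terms coming from the Fr\'echet expansion of $F$ at $u$. Setting $w_i^t=v_i^t-v_0^t$ for $i=1,\ldots,n$, let $W_t$ be the $n\times n$ matrix with columns $w_i^t$; general position guarantees $W_t$ is invertible, and the defining relation $L_{\sigma^t}(w_i^t)=F(v_i^t)-F(v_0^t)$ becomes $L_{\sigma^t}W_t=Y_t$, where $Y_t$ is the matrix with columns $F(v_i^t)-F(v_0^t)$. Write $r_t=\max_i\norm{v_i^t-u}$, which tends to $0$ by hypothesis.

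Fr\'echet differentiability at $u$ gives $F(v_i^t)-F(u)=L(v_i^t-u)+\eta_i^t$ with $\norm{\eta_i^t}=o(r_t)$. Subtracting the $i=0$ instance yields $F(v_i^t)-F(v_0^t)=Lw_i^t+(\eta_i^t-\eta_0^t)$, so $Y_t=LW_t+E_t$ with each column of $E_t$ of norm $o(r_t)$ and hence $\norm{E_t}_{\mathrm{op}}=o(r_t)$. Consequently $L_{\sigma^t}-L=E_tW_t^{-1}$, and
\[
\norm{L_{\sigma^t}-L}_{\mathrm{op}}\le\norm{E_t}_{\mathrm{op}}\cdot\norm{W_t^{-1}}_{\mathrm{op}}=o(r_t)\cdot\norm{W_t^{-1}}_{\mathrm{op}}.
\]
The proof thus reduces to the estimate $\norm{W_t^{-1}}_{\mathrm{op}}=O(r_t^{-1})$, equivalently to showing that the rescaled matrix $r_t^{-1}W_t$---whose columns have norms bounded by $2$---has uniformly bounded inverse.

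This geometric estimate is the main obstacle. The convex-hull hypothesis $u=\sum_{i=0}^n\lambda_i^tv_i^t$ with $\lambda_i^t\ge 0$ summing to $1$ rewrites as $\sum_{i=1}^n\lambda_i^tw_i^t=u-v_0^t$, expressing $u-v_0^t$ as a nonnegative combination of the edge vectors. To extract non-degeneracy from this, I would argue by contradiction: along any subsequence with $\norm{(r_t^{-1}W_t)^{-1}}_{\mathrm{op}}\to\infty$, pass to a further subsequence along which the rescaled vertices $(v_i^t-u)/r_t$ converge to points $\bar v_0,\ldots,\bar v_n$ with $\max_i\norm{\bar v_i}=1$, and use the interaction between the limiting convex-hull relation $0=\sum\bar\lambda_i\bar v_i$ and the degenerating simplex structure to force a contradiction. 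This step, where the convex-hull and general-position hypotheses must jointly prevent the normalized simplices from collapsing, is where the argument is most delicate.

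Combining the two ingredients gives $\norm{L_{\sigma^t}-L}_{\mathrm{op}}=o(r_t)\cdot O(r_t^{-1})=o(1)$, which is the required convergence in operator norm.
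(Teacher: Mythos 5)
Your first half is correct: writing $L_{\sigma^t}W_t=Y_t=LW_t+E_t$ and hence $L_{\sigma^t}-L=E_tW_t^{-1}$ with $\norm{E_t}=o(r_t)$ is a sound consequence of differentiability at $u$, and it reduces the lemma to the bound $\norm{W_t^{-1}}=O(r_t^{-1})$. The genuine gap is exactly at the step you flag as delicate, and it cannot be filled in the way you sketch: the stated hypotheses (vertices tending to $u$, $u$ in the convex hull of every $\sigma^t$, general position of each tuple) do \emph{not} prevent the rescaled simplices from collapsing. Take $n=2$, $u=0$, $v_0^t=(-\epsilon_t,0)$, $v_1^t=(\epsilon_t,\epsilon_t^3)$, $v_2^t=(\epsilon_t,-\epsilon_t^3)$ with $\epsilon_t\to0$. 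Then $0=\tfrac12 v_0^t+\tfrac14 v_1^t+\tfrac14 v_2^t$ lies in the hull, each triangle is nondegenerate, the rescaled vertices converge to $(-1,0),(1,0),(1,0)$, and the limiting relation $0=\tfrac12(-1,0)+\tfrac12(1,0)$ produces no contradiction whatsoever, while $\det\bigl(r_t^{-1}W_t\bigr)\to0$, so $\norm{(r_t^{-1}W_t)^{-1}}\to\infty$. So the compactness-and-contradiction argument you propose cannot succeed; what your computation actually proves is the lemma under an additional shape-regularity (bounded eccentricity) hypothesis on the simplices, which is not among the stated assumptions.

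For comparison, the paper's proof never inverts $W_t$: it writes $u=\sum_i\alpha_iv_i^t$ as a convex combination, proves the identity $L_{\sigma^t}=\sum_i\alpha_iL_{\sigma^t_i}$, where $\sigma^t_i$ is $\sigma^t$ with $v_i^t$ replaced by $u$ (terms with $\alpha_i=0$ being dropped), and then invokes convexity of the $\varepsilon$-ball around $L$ in $\Lin(\Rbb^n,\Rbb^n)$, the point being that each defined $L_{\sigma^t_i}$ comes from a simplex having $u$ itself as a vertex. Be aware, however, that the remaining assertion there --- that differentiability at $u$ eventually places every such $L_{\sigma^t_i}$ in the $\varepsilon$-ball --- is stated without proof and runs into precisely the thin-simplex phenomenon above (in the example one may even take $u$ as a vertex of $\sigma^t$, and a suitable $F$, differentiable at $0$ with bumps of height $\epsilon_t^2$ near $v_1^t$, makes $\norm{L_{\sigma^t}}$ blow up). So you have correctly isolated the real crux of the lemma; but as submitted your argument is incomplete, since its key geometric estimate is neither proved nor derivable from the hypotheses as given.
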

\begin{proof}
Fix temporarily $t$, let $\sigma=\sigma^t=(\vect v0n)$ and $u=\sum_i\alpha_iv_i$, with $\sum_i\alpha_i=1$ and $\vect\alpha0n\ge0$. Denote by $\sigma_i$ the tuple obtained from $\sigma$ by replacing $v_i$ with $u$. We claim that $L_\sigma=\sum_i\alpha_iL_{\sigma_i}$.
Note that if $u$ belongs to the affine subspace spanned by $\vect v0{{i-1}},\vect v{{i+1}}n$, then $L_{\sigma_i}$ is undefined, but this happens precisely when $\alpha_i=0$, so the above identity still makes sense. 
Note also that if $\vect\beta0n\in\Rbb$ are such that $\sum_i\beta_i=0$, then $L_\sigma(\sum_i\beta_iv_i)=\sum_i\beta_iF(v_i)$; this is easily proved by induction on the number of indices $i$ such that $\beta_i\not=0$.

Since both sides of the claimed equality are linear maps, it suffices to show that they agree on all differences $v_j-v_k$, i.e., that $F(v_j)-F(v_k)=\sum_{\alpha_i\not=0}L_{\sigma_i}\bigl(\alpha_i(v_j-v_k)\bigr)$ for every $j\not=k$. We fix then $j\not=k$, and compute $L_{\sigma_i}\bigl(\alpha_i(v_j-v_k)\bigr)$ under the assumption $\alpha_i\not=0$. We obtain:
\begin{itemize}
\item[(i)] if $i\not=j,k$, then $L_{\sigma_i}\bigl(\alpha_i(v_j-v_k)\bigr)=\alpha_i\bigl(F(v_j)-F(v_k)\bigr)$;
\item[(ii)] if $i=j$, then $L_{\sigma_j}\bigl(\alpha_j(v_j-v_k)\bigr)=
F(u)-(\alpha_k+\alpha_j)F(v_k)-\sum_{l\not=j,k}\alpha_lF(v_l)$;
\item[(iii)] if $i=k$, then $L_{\sigma_k}\bigl(\alpha_k(v_j-v_k)\bigr)=
-F(u)+(\alpha_j+\alpha_k)F(v_j)+\sum_{l\not=j,k}\alpha_lF(v_l)$.
\end{itemize}
Indeed, (ii) is true since $u=\sum_l\alpha_lv_l$ implies $\alpha_j(v_j-v_k)=u-(\alpha_k+\alpha_j)v_k-\sum_{l\not=j,k}\alpha_lv_l$ and $1-(\alpha_k+\alpha_j)-\sum_{l\not=j,k}\alpha_l=0$, and (iii) is analogous. Summing up everything we obtain
\begin{multline*}
\sum_{\alpha_i\not=0}L_{\sigma_i}\bigl(\alpha_i(v_j-v_k)\bigr)=
(\alpha_j+\alpha_k)F(v_j)-(\alpha_k+\alpha_j)F(v_k)\\
+\sum_{i\not=j,k}\alpha_i\bigl(F(v_j)-F(v_k)\bigr)
=F(v_j)-F(v_k),
\end{multline*}
which settles our claim.

Let now $\varepsilon>0$. Since $F$ is differentiable at $u$, there exists an index $t'$ such that for every $i=0,\ldots,n$ and every $t\ge t'$ the linear map $L_{\sigma^t_i}$ is either undefined, or is in the open ball of center $L$ and radius $\varepsilon$ in $\Lin(\Rbb^n,\Rbb^n)$. Since such a ball is convex, it contains $L_{\sigma^t}$ by our claim above.
\end{proof}

We can now prove Theorem~\ref{ref7}. Using the M\"onkemeyer-Selmer multidimensional continued fractions algorithm (or any other topologically convergent procedure, see~\cite{panti08} for the
M\"onkemeyer-Selmer algorithm or~\cite{schweiger00} for a complete panorama) it is easy to construct a sequence $(\sigma^t)$ converging
to $u$ such that each tuple $\sigma^t=(\vect{v^t}0n)$ is \newword{unimodular}. 
By this we mean that every $v_i^t$ is in $\Qbb^n$ and the $(n+1)\times(n+1)$ integer matrix
$$
S_t=\begin{pmatrix}
a^t_{1,0} & \cdots & a^t_{1,n} \\
\vdots & \cdots & \vdots \\
a^t_{n,0} & \cdots & a^t_{n,n} \\
d^t_0 & \cdots & d^t_n
\end{pmatrix}
$$
whose columns are the projective coordinates of $\vect {v^t}0n$, with $d^t_i=\den(v^t_i)$, is in $\GL_{n+1}\Zbb$. Let analogously
$$
W_t=\begin{pmatrix}
b^t_{1,0} & \cdots & b^t_{1,n} \\
\vdots & \cdots & \vdots \\
b^t_{n,0} & \cdots & b^t_{n,n} \\
d^t_0 & \cdots & d^t_n
\end{pmatrix}
$$
be the integer matrix whose columns are the projective coordinates of $F(v^t_0),\ldots,F(v^t_n)$. Then the $n\times n$ matrix $A_t$ that expresses $L_{\sigma^t}$ w.r.t.~the standard basis of $\Rbb^n$ is the upper left minor of the $(n+1)\times(n+1)$ matrix $B_t$ defined by the identity
\begin{multline*}
B_t\begin{pmatrix}
v_0-v_n & \cdots & v_{n-1}-v_n & v_n \\
0 & \cdots & 0 & 1
\end{pmatrix}\\
=
\begin{pmatrix}
F(v_0)-F(v_n) & \cdots & F(v_{n-1})-F(v_n) & F(v_n) \\
0 & \cdots & 0 & 1
\end{pmatrix};
\end{multline*}
here the vectors on the top rows are $n$-rows column vectors.
Multiplying to the right both sides of the above identity first by an appropriate elementary matrix, and then by the diagonal matrix whose diagonal entries are $d^t_0,\ldots,d^t_n$, we get the identity $B_tS_t=W_t$, which implies that $B_t=W_tS_t^{-1}$ has integer entries. Therefore $A_t$ has integer entries. By Lemma~\ref{ref9} $A_t$ converges to $A$ for $t$ going to infinity, and since $\Mat_{n\times n}\Zbb$ is discrete in $\Mat_{n\times n}\Rbb$ we conclude that $A$ has integer entries. By Theorem~\ref{ref1} $F$ preserves the Lebesgue measure, so $A$ is in $\GL_n\Zbb$.

If $F$ is continuously differentiable on the open connected region $V$, then clearly the Jacobian matrix $A$ must be constant, since $\GL_n\Zbb$ is discrete.
The map $F(v)-Av$ has then null differential on $V$, so it is constant~\cite[(8.6.1)]{dieudonne69}; therefore $F(v)=Av+w$ on $V$, for some fixed column vector $w\in\Rbb^n$. As the translation by $A^{-1}w$ equals the composite map $A^{-1}\circ F:V\to\Rbb^n$, it preserves all denominators. By Lemma~\ref{ref6}(ii) $A^{-1}w\in\Zbb^n$, and hence $w\in\Zbb^n$; this concludes the proof of Theorem~\ref{ref7}.

\smallskip

Theorem~\ref{ref7} yields a weaker version of Theorem~\ref{ref1}, whose proof is independent of Theorem~\ref{ref11}.

\begin{corollary}
Let\labell{ref12} $F$ be a bilipschitz homeomorphism of open subsets of $\Rbb^n$ that preserves all denominators. Then $F$ preserves the Lebesgue measure.
\end{corollary}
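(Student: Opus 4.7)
The plan is to combine Theorem~\ref{ref7} with Rademacher's theorem and the Lipschitz change-of-variables formula, obtaining pointwise information about the Jacobian almost everywhere and then integrating it.

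First I would invoke Rademacher's theorem: since $F$ is in particular Lipschitz, it is differentiable at $\lambda$-almost every $u\in U$, with well-defined Jacobian matrix $JF(u)$. At every such point of differentiability, Theorem~\ref{ref7} applies (the hypothesis of preserving all denominators is exactly what is assumed), and yields $JF(u)\in\GL_n\Zbb$. Since integer matrices whose inverses are also integer matrices have determinant $\pm1$, we conclude that $\abs{\det JF(u)}=1$ for $\lambda$-a.e.\ $u\in U$.

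Next I would handle measurability. Because both $F$ and $F^{-1}$ are Lipschitz, standard estimates give $\lambda\bigl(F[N]\bigr)\le C^n\lambda(N)$ for every null set $N$ (with $C$ the Lipschitz constant of $F$), and symmetrically for $F^{-1}$; so null sets are preserved in both directions. Moreover Lipschitz images of $F_\sigma$ sets are $F_\sigma$ (being countable unions of Lipschitz images of compacta, which are compact), and since every Lebesgue measurable set differs from an $F_\sigma$ by a null set, $\lambda$-measurability is preserved by both $F$ and $F^{-1}$.

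Finally I would apply the change-of-variables theorem for Lipschitz maps (which, for bilipschitz maps on open sets, is classical and does not require any measure-theoretic input beyond Rademacher; see, e.g., the area formula in Federer or Evans--Gariepy). For any $\lambda$-measurable $A\subseteq U$,
\begin{equation*}
\lambda\bigl(F[A]\bigr)=\int_A\abs{\det JF(u)}\,d\bar x=\int_A1\,d\bar x=\lambda(A),
\end{equation*}
using the injectivity of $F$ and the pointwise identity $\abs{\det JF}=1$ a.e.\ established above. Combined with the previous paragraph, this gives preservation of Lebesgue measure in the sense of the paper, independently of Theorem~\ref{ref11}. The only genuinely non-elementary inputs are Rademacher's theorem and the Lipschitz change-of-variables formula; the rest is a direct consequence of Theorem~\ref{ref7}, which is the real work.
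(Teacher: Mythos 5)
Your argument does prove the literal statement, but it fails at exactly the point that gives this corollary its purpose, and your closing claim of independence from Theorem~\ref{ref11} is not correct. In the paper, Theorem~\ref{ref7} reaches the conclusion $A\in\GL_n\Zbb$ in two steps: the unimodular-simplex argument together with Lemma~\ref{ref9} shows only that $A$ has \emph{integer entries}, and the final step $\abs{\det A}=1$ is obtained by citing Theorem~\ref{ref1}, hence Theorem~\ref{ref11}. So when you invoke Theorem~\ref{ref7} as a black box to get $JF(u)\in\GL_n\Zbb$, you are silently re-importing Theorem~\ref{ref11}; but if one is willing to use that machinery, the corollary is immediate from Theorem~\ref{ref1} itself (a bilipschitz map is continuous and injective, and preserving all denominators implies preserving many), and there is nothing left to prove. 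The whole point of Corollary~\ref{ref12}, as the paper stresses both in \S1 and just before its statement, is to give a proof that bypasses Theorem~\ref{ref11}.

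The missing idea is the one the paper uses to plug this hole: apply only the integer-entries part of the proof of Theorem~\ref{ref7}, and recover the determinant condition from the bilipschitz hypothesis rather than from Theorem~\ref{ref1}. By Rademacher both $F$ and $F^{-1}$ are differentiable a.e., and (Rudin, Lemma~7.25, or your own observation that a Lipschitz map sends null sets to null sets) the $F^{-1}$-image of the nondifferentiability set of $F^{-1}$ is null; hence for a.e.\ $u$ the map $F$ is differentiable at $u$ with Jacobian $A$ and $F^{-1}$ is differentiable at $F(u)$ with Jacobian $B$. The integer-entries argument applies to both, and the chain rule gives $BA=I$, so two integer matrices are mutually inverse and therefore $A,B\in\GL_n\Zbb$ with $\abs{\det A}=1$ --- no appeal to Theorem~\ref{ref11} anywhere. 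Your remaining steps (preservation of null sets and of measurability in both directions, then the area formula with $\abs{\det JF}=1$ a.e.) are fine and coincide with the paper's conclusion via the area formula; only the source of $\abs{\det JF(u)}=1$ needs to be replaced as above for the corollary to deliver what it promises.
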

\begin{proof}
By Rademacher's theorem~\cite[Theorem~2, p.~81]{evansgariepy92}
both $F$ and $F^{-1}$ are differentiable $\lambda$-a.e.. Moreover, by~\cite[Lemma~7.25]{rudin87} the $F^{-1}$-image of the set of nondifferentiability points of $F^{-1}$ is a Lebesgue nullset. Therefore, for $\lambda$-a.e.~$u\in\dom(F)$, $F$ is differentiable at $u$ and $F^{-1}$ is differentiable at $F(u)$.
Let $A$ be the Jacobian matrix of $F$ at $u$ and $B$ that of $F^{-1}$ at $F(u)$. By the proof of Theorem~\ref{ref7} both $A$ and $B$ have integer entries; since their product is the identity matrix, both of them are in $\GL_n\Zbb$. Note that in the proof of Theorem~\ref{ref7} we employed Theorem~\ref{ref11} only in showing $\abs{\det(A)}=1$; this is automatic here due to our stronger hypotheses. The conclusion now follows from the area formula~\cite[Theorem~1, p.~96]{evansgariepy92}.
\end{proof}

\section{Examples of denominator-preserving maps}\labell{ref8}

The gingerbreadman map $F(x,y)=(1-y+\abs{x},x)$ is a well known example of a denominator-preserving area-preserving homeomorphism of $\Rbb^2$ possessing interesting dynamical properties~\cite{devaney84}. It has a unique elliptic fixed point at $(1,1)$, which is surrounded by infinitely many polygonal annuli on which the dynamics is hyperbolic and chaotic in regions of positive measure.

Denominator-preserving volume-preserving homeomorphisms of the unit even-dimensional cube are constructed in~\cite{pantibernoulli}. These are linked twist maps, fixing the boundary of the cube, ergodic and uniformly hyperbolic throughout the whole domain.

We close this paper presenting two examples of denominator-preserving area-preserving homeomorphisms of $\Rbb^2$ which are related to (non)differentiability issues. Both of them are defined with the help of an auxiliary function $f:\Rbb\to\Rbb$. Given such an $f$ we define $F:\Rbb^2\to\Rbb^2$ by $F(x,y)=(x,y+f(x))$; $F$ is then an area-preserving bijection, is a homeomorphism iff $f$ is continuous, and is differentiable at every point of the line $\set{x=\alpha}$ iff $f$ is differentiable at $\alpha$. Moreover, $F$ preserves all denominators, provided that $f(l)\in\bigl(\den(l)\bigr)^{-1}\Zbb$ for every $l\in\Qbb$. Indeed, say that $u=d^{-1}(a,b)$ has denominator $d$. Then $l=a/d$ has denominator $e=d/(a,d)$ and $f(l)=(a,d)c/d$ for some $c\in\Zbb$. Hence $F(u)=d^{-1}(a,b+(a,d)c)$ has denominator $d$, since $(a,b,d)=1$ implies $(a,b+(a,d)c,d)=1$. The same argument applies to $F^{-1}$ (which is induced by $-f$), and hence both $F$ and $F^{-1}$ map points of a certain denominator to points of the same denominator, so both of them preserve all denominators.

We construct our first example by defining a sequence $p_1,p_2,\ldots$ of prime numbers as follows: $p_1$ is $2$, and $p_{t+1}$ is the least prime strictly greater than $(1+1/t)p_t$. The sequence $t/p_t$ is strictly decreasing (the initial terms are $1/2$, $2/5$, $3/11$, $4/17$, $5/23$, $\ldots$), and converges to $0$ because it is dominated by the sequence $t/(\text{the $t$-th prime number})$, that converges to $0$ by the Prime Number Theorem. For each $t$ choose two rational numbers $a_t/b_t$, $c_t/d_t$ such that
$$
\frac{c_{t+1}}{d_{t+1}} \le \frac{a_t}{b_t} <
\frac{t}{p_t} < \frac{c_t}{d_t},
$$
and each of the two intervals $[a_t/b_t,t/p_t]$, $[t/p_t,c_t/d_t]$ is unimodular, i.e.,
$$
\begin{vmatrix}
t & a_t\\
p_t & b_t
\end{vmatrix}
=
\begin{vmatrix}
c_t & t\\
d_t & p_t
\end{vmatrix}
=1.
$$
This is easily accomplished either by using continued fractions or by using Farey partitions~\cite[Chapter~III]{hardywri85}. Define now $f$ by
$$
f(x)=
\begin{cases}
b_tx-a_t, & \text{if $a_t/b_t\le x \le t/p_t$;}\\
-d_tx+c_t, & \text{if $t/p_t\le x \le c_t/d_t$;}\\
0, & \text{otherwise.}
\end{cases}
$$
Then $f$ is continuous and satisfies $f(l)\in\bigl(\den(l)\bigr)^{-1}\Zbb$ for every $l\in\Qbb$. Moreover, it is differentiable at each 
$\alpha\in\Rbb\setminus\bigcup_{t\ge1}\set{a_t/b_t,t/p_t,c_t/d_t}$.
Indeed, the only problematic point is $\alpha=0$. However, the ratio $f(x)/x$ is $0$ for $x$ outside the intervals $[a_t/b_t,c_t/d_t]$, and has value
$$
M_t(x)=
\frac{\min\set{b_tx-a_t,-d_tx+c_t}}{x},
$$
on $[a_t/b_t,c_t/d_t]$. It is easily checked that $M_t$ attains its maximum value $(1/p_t)/(t/p_t)=1/t$ at $x=t/p_t$; hence $f$ is differentiable at $0$ with derivative~$0$. The map $F$ induced by $f$ as above is then a homeomorphism of $\Rbb^2$ which preserves all denominators, is differentiable at the origin with differential the identity map ---in accordance with Theorem~\ref{ref7}--- but is not affine in any neighborhood of the origin.

In our second example we construct a nowhere differentiable denominator-preserving homeomorphism of $\Rbb^2$.
Recall the construction of the Stern-Brocot sequence on the real unit interval~\cite{kessebohmerstr10}; the only interval \newword{belonging} to stage $0$ is $[0/1,1/1]$. At stage $t+1$ each of the $2^t$ intervals $[a/b,c/d]$ belonging to stage $t$ is split into two intervals $[a/b,(a+c)/(b+d)]$ and $[(a+c)/(b+d),c/d]$; the point $(a+c)/(b+d)$ is the \newword{Farey mediant} of $a/b$ and $c/d$. For $t=1,2,3,\ldots$, define $g_t:\oi\to\Rbb$ as follows:
\begin{itemize}
\item $g_t=0$ at all endpoints of all intervals belonging to stage $t-1$;
\item $g_t=1/\den(u)$ at each Farey mediant $u$ appearing at stage $t$;
\item $g_t$ is affine linear on each interval belonging to stage $t$.
\end{itemize}
The functions $g_t$ are sawlike, $g_1\ge g_2\ge g_3\ge\cdots$ on $\oi$,
$\norm{g_t}_\infty=1/(t+1)$, each interval $[u,v]$ belonging to stage $t$ is unimodular, and $g_t$ has slope $\pm\den(u)\in\Zbb\setminus\set{0}$ on $[u,v]$, provided that the Farey mediant inserted at step $t$ is $v$; otherwise $g_t$ has slope $\pm\den(v)$ on $[u,v]$. Consider the alternating sum $\sum_{1\le t}(-1)^{t-1}g_t$; by the Leibniz test it converges uniformly to a continuous function $f$. Given a rational number $0\le l\le 1$, we have $f(l)\in\bigl(\den(l)\bigr)^{-1}\Zbb$; indeed $l$ appears as an endpoint at some stage $t$ of the procedure, and then $g_m(l)=0$ for every $m>t$. Hence $f(l)=\sum_{1\le m\le t}(-1)^{m-1}g_m(l)$, and each $g_m(l)$ is in $\bigl(\den(l)\bigr)^{-1}\Zbb$.

We extend $f$ to a period-$1$ function defined on all of $\Rbb$ in the obvious way, and we claim that $f$ is of Weierstrass type, i.e., is nowhere differentiable. Indeed, let $0\le\alpha\le1$, and let $I_1\supset I_2\supset I_3\supset\cdots$ be a chain of intervals with $I_t$ belonging to stage $t$ and $\bigcap_{t\ge1}I_t=\set{\alpha}$ (this chain is unique iff $\alpha\not=0,1$ is irrational, and is determined by the continued fraction expansion of $\alpha$). Let $I_t=[u_t,v_t]$, let $r_t$ be the slope of $g_t$ on $I_t$, and let
$s_t=\bigl(f(v_t)-f(u_t)\bigr)/(v_t-u_t)$. Since each $g_m$, for $1\le m\le t$, is linear on $I_t$, and $g_m(u_t)=g_m(v_t)=0$ for $m>t$, the number $s_t$ is precisely the slope of $f_t=\sum_{1\le m\le t}(-1)^{m-1}g_m$ on $I_t$, i.e., $s_t=\sum_{1\le m\le t}(-1)^{m-1}r_m$.
If $f$ were differentiable at $\alpha$ then, by Lemma~\ref{ref9}, $\lim_{t\to\infty}s_t$ would exist. But this is impossible, since $r_m\in\Zbb\setminus\set{0}$ for each $m$.
\begin{figure}[H]
\begin{center}
\includegraphics[height=7cm]{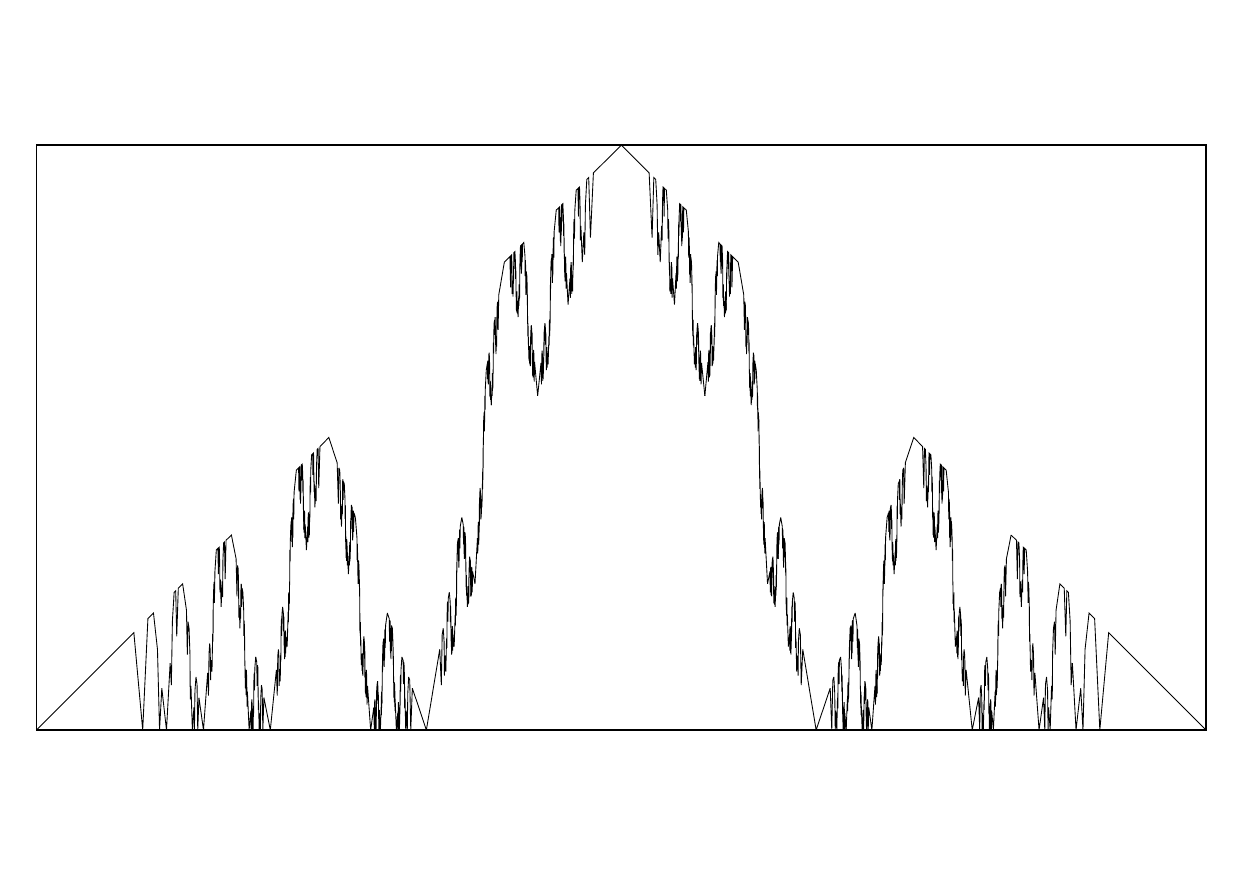}\\
\vspace{-0.8cm}
{Graph of $f_{11}$.}
\end{center}
\end{figure}


\end{document}